\documentclass[12pt]{amsart}
\usepackage{graphicx}
\usepackage[nocompress]{cite}
\usepackage{amssymb, comment, color}
\usepackage[toc,page]{appendix}
\usepackage{epsf,overpic,amssymb,amscd,times,euscript}
\usepackage[utf8]{inputenc}
\usepackage[english]{babel}
\usepackage{amsmath}
\usepackage{amsthm}
\usepackage{amsfonts}
\usepackage{amssymb}
\usepackage{amscd}
\usepackage{bezier}
\usepackage{enumerate}
\usepackage{mathtools}

\usepackage{graphicx}
\usepackage{upgreek}

\usepackage[colorlinks=true,urlcolor=green,bookmarks=true,bookmarksopen=true,citecolor=green]{hyperref}

\usepackage{parskip}

\usepackage{bbm}

\newcommand{\N}{\mathbbm{N}}                     
\newcommand{\R}{\mathbbm{R}}                     

\newtheorem{thm}{Theorem}[section]               
\newtheorem*{thm*}{Theorem}               
\newtheorem*{cor*}{Corollary}        
\newtheorem{lem}[thm]{Lemma}  
\newtheorem*{lem*}{Lemma}
\theoremstyle{definition}
\newtheorem{defn}[thm]{Definition}      

\newtheorem{rem}[thm]{Remark}           
\newtheorem{question}{Question}
 \newtheorem*{acknowledgement*}{\protect\acknowledgementname}
\newcounter{claim}

 \providecommand{\acknowledgementname}{Acknowledgement}

\author{Marcelo R.R. Alves}
\thanks{M.R.R. Alves was supported by Deutsche
Forschungsgemeinschaft (DFG, German Research Foundation) via the grant “Himmelsmechanik, Hydrodynamik und Turing-Maschinen” - 541525489.}
\address{Marcelo R.R. Alves, 
    Institute of Mathematics ,\\
	University of Augsburg,
	Chair Analysis and Geometry ,
	Universitätsstraße 14,
    DE-86159 Augsburg
	Germany.}
\email{\texttt{marcelorralves@gmail.com}}

\author{Matthias Meiwes}
\address{Matthias Meiwes.}
\email{\texttt{matthias.meiwes@live.de}}

\title{Polytopes and $C^0$-Riemannian metrics with positive $h_{\rm top}$}

\begin{document}

\begin{abstract}
We study Reeb dynamics on starshaped hypersurfaces in $\R^4$ arising as smoothings of starshaped polytopes. Using the $C^0$--stability of positive topological entropy for Reeb flows in dimension three from \cite{ADMP}, we show that there exist starshaped polytopes $P$ such that for any starshaped smoothing of $\partial P$ the associated Reeb flows have positive topological entropy. This answers a question of Ginzburg and Ostrover.

Similarly, we show that given a closed surface $M$ and a number \linebreak $C>0$, there exist continuous and non-differentiable Riemannian metrics $g$ on $S$ with $h_{\rm top}>C$ in the sense that for any smoothing of $g$ the associated geodesic flows have $h_{\rm top}>C$.

\end{abstract}

\maketitle
\tableofcontents

\section{Introduction}

The objective of this note is to study the complexity of Reeb dynamics of polytopes and $C^0$-contact forms and geodesic dynamics $C^0$-Riemannian metrics. These objects lie at the boundary of the  realm of classical dynamics because one cannot in general define Reeb flows of polytopes and $C^0$-contact forms, or geodesic flows of $C^0$-Riemannian metrics. On the other hand, one can always approximate these objects by smooth ones in their respective classes and study the dynamics of these approximations. 
In this note, we show how recent advances in the field of symplectic dynamics can be used to provide a natural and interesting definition of dynamical complexity for these non-smooth objects.

The topological entropy $h_{\rm top}$ is a dynamical invariant, a non-negative number that measures the exponential complexity of a dynamical system. It plays an important role in the theory of dynamical systems, connecting the topological and the measure theoretical approaches to studying dynamics.
Positivity of $h_{\rm top}$ for a dynamical system means that the system possesses some type of topological chaos; we refer the reader to \cite{Hasselblatt-Katok}.

In this note, we prove the existence of polytopes $P$ in $\mathbb{R}^4$ such that the Reeb flows on $\partial P$ have positive topological entropy in the following sense: for any star-shaped smoothing of $P$ the corresponding Reeb flow will have positive topological entropy.

We prove similar results for $C^0$-contact forms on closed contact $3$-manifolds and $C^0$-Riemannian metrics. For the basics of contact geometry and Reeb flows we refer the reader to \cite{GeigesBook}. For the basics of geodesic flows of Riemannian metrics we refer the reader to \cite{Paternain}.

\subsection{Definitions and main results}

By a starshaped smoothing of $\partial P$ we mean a sequence of $C^3$-smooth\footnote{For the main result of this paper we need the smoothings to be $C^3$. Notice that in \cite{ChaidezHutchings2021_ReebPolytopes} the authors consider $C^1$-smoothings.} starshaped hypersufaces in $\R^4$ which converge in the $C^0$-distance to $\partial P$. We say that for a star-shaped smoothing of $\partial P$ its Reeb flow  has positive topological entropy, if the topological entropies of the Reeb flows of sufficiently large elements of the star-shaped smoothing sequence are bounded away from $0$ by a positive constant. 

\begin{defn} \label{def:entropy-polytopes}
Let $(M_j)_{j\in \N }$ be a starshaped smoothing of $\partial P$. Define
\[
h_{\rm top} ((M_j)_{j\in \N}):= \liminf_{j\to +\infty} h_{\rm top}(M_j),
\]
where $h_{\rm top}(M_j)$ denotes the topological entropy of the Reeb flow on $M_j$. We define the topological entropy $h_{\rm top}(\partial P)$ of $\partial P$ as the infimum of the topological entropy of its star-shaped smoothings.
\end{defn}

\begin{rem}
This is, in our opinion, the best possible definition of the topological entropy of the Reeb dynamics on polytopes.

For a  convex or starshaped polytope $P$ one cannot define a Reeb flow on the boundary of $P$ because the characteristic line field on $\partial P$ (induced by the canonical symplectic form $\omega_0$ on $\R^4$) is not continuous at the vertices and some of the edges of the polytope; see for example \cite{ChaidezHutchings2021_ReebPolytopes}. It is thus natural to consider smoothings of $\partial P$ to define the dynamical complexity of $\partial P$.
\end{rem}

Reeb flows on the boundary of polytopes have recently been considered in several works: \cite{ChaidezHutchings2021_ReebPolytopes,HaimKislevOstroverViterbo,HaimKislev2019_symplectic_polytopes,HaimKislevOstrover2023_symplectic_barriers}.

Starshaped polytopes in $\mathbbm{R}^4$ are a particular case of $C^0$-contact forms on the contact manifold $(S^3,\xi_{\rm tight})$. Given a smooth closed cooriented contact $3$-manifold $(M,\xi)$ a $C^0$-contact form $\alpha$ on $(M,\xi)$ is a continuous $1$-form on $M$ such that for every $p \in M$ we have $\ker \alpha_p - \xi_p$, i.e. a continuous defining $1$-form for the $2$-dimensional distribution $\xi$. Since $\alpha$ is only continuous we cannot define its Reeb vector-field and its Reeb flow, so we use smoothings of $\alpha$ to associate to it a dynamical complexity. 

A smoothing of a $C^0$-contact form $\alpha$ on $(M,\xi)$ is a sequence $(\alpha_j)_{j\in \N}$ of $C^3$-smooth contact forms on $(M,\xi)$ such that $\alpha_j$ converges to $\alpha$ in the $C^0$ sense as $j \to +\infty$. Since for each $\alpha_j$ there exists a unique function $f_j: M \to \R$ such that $e^{f_j}\alpha=\alpha_j$, this equivalent to demanding that the sequence of function $f_j$ converges uniformly to the $0$-function on $M$. We then define:
\begin{defn}\label{def:entropy-C0contactforms}
Let $(M,\xi)$ be a smooth closed cooriented contact $3$-manifold and $\alpha$ a $C^0$-contact form $(M,\xi)$.
Given a smoothing $(\alpha_j)_{j\in \N }$ of $\alpha$, we define
\[
h_{\rm top} ((\alpha_j)_{j\in \N}):= \liminf_{j\to +\infty} h_{\rm top}(\phi_{\alpha_j}),
\]
where $\phi_{\alpha_j}$ is the Reeb flow of $\alpha_j$. We then define the topological entropy $h_{\rm top}(\alpha)$ of $\alpha$ as the infimum of the topological entropy of its smoothings.  
\end{defn}

\begin{rem}
This is, in our opinion, the best possible definition of the topological entropy of the Reeb dynamics for $C^0$-contact forms and geodesic dynamics for $C^0$-Riemannian metrics.

A remarkable consequence of the results of \cite{ADMP} is that for a $C^\infty$-open and dense set $\mathcal{U}$ of $C^3$-smooth contact forms on $(M,\xi)$, for any $\alpha \in \mathcal{U}$ the topological  $ h_{\rm top}(\alpha)$ in the sense of Definition \ref{def:entropy-C0contactforms} coincides with the classical definition of the topological entropy $h_{\rm top}(\phi_\alpha)$ of the Reeb flow of $\alpha$. \textbf{In other words, Definition \ref{def:entropy-C0contactforms} gives a lower semicontinuous extension of the classical definition of $h_{\rm top}$ from  $\mathcal{U}$ to the set of $C^0$-contact forms.} Moreover, as conjectured in \cite{ADMP}, we expect that the $\mathcal{U}$ should be the set of all $C^3$-smooth contact forms on $(M,\xi)$.
\end{rem}

In the same spirit, we define the topological entropy of a $C^0$-Riemannian metric on a surface. For us, a  $C^0$-Riemannian metric $g$ on a surface $S$ is a continuous family of inner products on the tangent space of $S$. A smoothing of $g$ is any sequence $(g_j)_{j \in \N}$ of $C^3$-smooth Riemannian metrics such that $g_n$ converges in $C^0$ to $g$. 

\begin{defn}\label{def:entropy-C0-Riemannian-metrics}
Let $g$ be a $C^0$-Riemannian metric on a closed surface $S$. Given a smoothing $(g_j)_{j\in \N }$ of $g$, we define
\[
h_{\rm top} ((g_j)_{j\in \N}):= \liminf_{j\to +\infty} h_{\rm top}(g_j),
\]
where $h_{\rm top}(g_j)$ denotes the topological entropy of the geodesic flow on $g_j$ restricted to the unit sphere bundle. We then define the topological entropy $h_{\rm top}(g)$ of $g$ as the infimum of the topological entropy of its smoothings.
\end{defn}

\begin{rem}
Again, it is a consequence of the results of \cite{ADMP} is that for the set $\mathcal{M}_{\rm bumpy}$ of bumpy $C^3$-smooth Riemannian metrics on $S$, we have that for any $g \in \mathcal{M}_{\rm bumpy}$ the topological  $ h_{\rm top}(g)$ in the sense of Definition \ref{def:entropy-C0-Riemannian-metrics} coincides with the classical definition of the topological entropy $h_{\rm top}(\phi_g)$ of the geodesic flow of $g$. \textbf{In other words, Definition \ref{def:entropy-C0-Riemannian-metrics} provides a lower semicontinuous extension the classical definition of $h_{\rm top}$ from the set of bumpy metrics to the set of $C^0$-Riemannian forms.} Moreover, as conjectured in \cite{ADMP}, we expect that the results in \cite{ADMP} should be valid for all $C^3$-smooth Riemannian metrics on $S$. If this is indeed the case, Definition \ref{def:entropy-C0-Riemannian-metrics} would extend the definition of topological entropy from $C^3$-smooth Riemannian metrics on surfaces to $C^0$-Riemannian metrics on surfaces.

\end{rem}

Our main result regarding polytopes is:
\begin{thm}\label{thm:polytope-positive-entropy}
There exist starshaped polytopes $P\subset \R^4$ such that for any starshaped smoothing $\widetilde M$ of $\partial P$, the Reeb flow on $\widetilde M$ has positive topological entropy.
\end{thm}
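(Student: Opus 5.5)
The plan is to obtain $P$ as a polytope that is $C^0$-close to a smooth starshaped hypersurface $\Sigma$ whose Reeb flow is robustly chaotic, and then apply the $C^0$-stability theorem of \cite{ADMP}. That theorem says: if $\alpha_0$ is a $C^3$ contact form on a closed $3$-manifold whose Reeb flow has a hyperbolic periodic orbit with a transverse homoclinic intersection (or, more generally, whose strip Legendrian contact homology / cylindrical contact homology has exponential growth), then there are $\epsilon>0$ and $c>0$ such that every $C^3$ contact form $\alpha=e^{f}\alpha_0$ with $\|f\|_{C^0}<\epsilon$ has $h_{\rm top}(\phi_\alpha)\geq c$. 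The argument therefore has three steps, and we carry them out in order.

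\emph{Step 1.} We choose a smooth starshaped hypersurface $\Sigma\subset\R^4$ whose Reeb flow has positive topological entropy and lies in the $C^0$-robust class of \cite{ADMP}. One option is to take a small perturbation of a generic starshaped hypersurface with a transverse homoclinic orbit; the set of such hypersurfaces is $C^\infty$-open and nonempty. We write $\lambda_0$ for the standard Liouville form. Starshaped hypersurfaces correspond to contact forms $\rho^2\lambda_0|_{S^3}$ on $(S^3,\xi_{\rm tight})$ via radial projection, where $\rho:S^3\to(0,\infty)$ is the radial function. This yields a form $\alpha_\Sigma=\rho_\Sigma^2\lambda_0$ together with constants $\epsilon,c$ from \cite{ADMP}.

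\emph{Step 2.} We choose a starshaped polytope $P$ with radial function $\rho_P$ satisfying $\|\log\rho_P^2-\log\rho_\Sigma^2\|_{C^0}<\epsilon/2$. This is elementary. Triangulate $\Sigma$ finely, i.e. take the boundary of the cone over a fine triangulation with vertices on $\Sigma$. Alternatively, intersect finitely many half-spaces in the convex case. Fine enough polyhedral approximation is starshaped with respect to the origin, because $\Sigma$ is transverse to the radial field, and its radial function is uniformly close to $\rho_\Sigma$. The only care needed is starshapedness of each simplex, which holds once the mesh is small relative to the lower bound on the angle between $\Sigma$ and the radial direction.

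\emph{Step 3.} Let $(M_j)$ be any starshaped smoothing of $\partial P$, with radial functions $\rho_j\to\rho_P$ uniformly. We need to confirm that $C^0$ convergence of hypersurfaces, measured in Hausdorff distance, gives uniform convergence of radial functions; this follows because all hypersurfaces involved are starshaped and bounded away from the origin. For large $j$, $\|\log\rho_j^2-\log\rho_\Sigma^2\|_{C^0}<\epsilon$. The Reeb flow on $M_j$ is conjugate, up to time reparametrization by a constant, to that of $\alpha_j=\rho_j^2\lambda_0|_{S^3}$. Writing $\alpha_j=e^{f_j}\alpha_\Sigma$ with $\|f_j\|_{C^0}<\epsilon$, the cited theorem gives $h_{\rm top}\ge c$. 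Hence the $\liminf$ in Definition~\ref{def:entropy-polytopes} is at least $c>0$ for every smoothing, so $h_{\rm top}(\partial P)\ge c>0$.

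The main obstacle is Step 1 combined with the exact hypotheses of \cite{ADMP}. We must be sure that the class of forms to which their $C^0$-stability applies contains starshaped examples in $\R^4$; such examples exist, for instance via a Birkhoff section with a horseshoe. We must also check that the regularity required ($C^3$ smoothings) is compatible with the statement, and that the $C^0$-closeness in \cite{ADMP} is measured on the forms, as in $f_j$, rather than on the flows. The polytope approximation and the translation between hypersurfaces and forms are routine once this is settled.
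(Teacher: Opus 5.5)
Your proposal is correct and follows the paper's proof. Take a starshaped $\Sigma$ whose contact form lies in the $C^0$-robust class of \cite{ADMP}. Choose a starshaped polytope whose boundary is a radial graph uniformly close to $\Sigma$. Then any starshaped smoothing eventually induces a form $e^{f_j}\alpha_\Sigma$ with $\|f_j\|_{C^0}$ below the \cite{ADMP} threshold. Your radial-function description $\rho^2\lambda_0|_{S^3}$ is just the paper's symplectization collar $U\cong(-\delta,\delta)\times M$ in other coordinates. You also need not pin down the precise hypotheses of \cite{ADMP}, and your description of them is a guess: as the paper uses it, it suffices that their $C^0$-robust forms contain a $C^\infty$-open dense set, which therefore meets the nonempty $C^\infty$-open set of starshaped hypersurfaces with positive entropy.
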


This gives a partial answer to a question posed independently by Ginzburg and Ostrover: they asked whether there exist polytopes $P\subset \mathbbm{R}^4$ with the property that the Reeb flow on $\partial P$ has positive $h_{\rm top}$ in the sense above. Our result does not give explicit examples of such polytopes, but suggests that such examples should be abundant. It is a very interesting project to recover a lower bound on the topological entropy of smoothings of $\partial P$ from the combinatorial data of the polytope $P$. See Section \ref{sec:questions}.

For the polytopes $P$ covered by Theorem~\ref{thm:polytope-positive-entropy}, it follows from \cite{CGGM-Reeb,GGM-geodesic,Meiwes-linking} that the Reeb flows on any smooth starshaped smoothing of $\partial P$ exhibit rich and dynamically complicated orbit structures. This complexity is reflected both on the linking structure of the periodic orbits of the flow and on the growth properties of its  Floer–theoretic invariants. 

Our next theorem shows the existence of non-differentiable $C^0$-contact forms on any closed cooriented contact $3$-manifold with arbitrarily large $h_{\rm top}$ in the sense of \ref{def:entropy-C0contactforms}, and non-differentiable $C^0$-Riemannian metrics on closed orientable surfaces with arbitrarily large $h_{\rm top}$. 

\begin{thm} \label{thm:entropy-for-C0contact/metrics}
Let $S$ be a closed surface. For any $C>0$, there exists a non-differentiable $C^0$-Riemannian metric $g_C$ with area $1$ and such that $h_{\rm top}(g_C)>C $.

Let $(M,\xi)$ be a closed cooriented contact $3$-manifold. For any $C>0$, there exists a non-differentiable $C^0$-contact form $\alpha_C$ such that $h_{\rm top}(\alpha_C) > C$.
\end{thm}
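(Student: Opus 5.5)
The plan is to combine the $C^0$-stability result of \cite{ADMP} with a construction of a smooth object of large entropy that we then perturb, in a $C^0$-small but non-differentiable way, inside the region where \cite{ADMP} gives stability. The form of \cite{ADMP} we use is: if $\alpha_0$ is a $C^3$ (say $C^\infty$, nondegenerate) contact form on a closed $3$-manifold, then for every $\epsilon>0$ there is $\delta>0$ such that every $C^3$ contact form $\alpha'=e^{f}\alpha_0$ with $\|f\|_{C^0}<\delta$ satisfies $h_{\rm top}(\phi_{\alpha'})\geq h_{\rm top}(\phi_{\alpha_0})-\epsilon$. The analogous statement for bumpy metrics on surfaces follows by the same result applied to geodesic flows on unit tangent bundles, which are Reeb flows.

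\textbf{Step 1 (smooth model with large entropy).} For the metric statement, on any closed surface $S$ we pick a smooth bumpy metric $g_0$ of area $1$ with $h_{\rm top}(g_0)>C+1$. On $S$ of genus $\geq 2$, a hyperbolic metric already has entropy bounded below by area considerations; to push it above $C$, we insert many thin tubes or use a metric with a region of very negative curvature. On $S^2$, $\mathbb{R}P^2$, $T^2$ and the Klein bottle we glue in a small region of very negative curvature containing a horseshoe, normalizing the area to $1$. By Kupka–Smale we may perturb to a bumpy metric while keeping the entropy, by lower semicontinuity for $C^\infty$-perturbations (Katok/Newhouse, or via horseshoes). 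For contact forms on $(M,\xi)$ we take any contact form and modify it near a Legendrian knot or in a Darboux ball to create a hyperbolic horseshoe of entropy $>C+1$. This could be done, e.g., by realizing a suitable geodesic-flow-like model in a standard neighborhood, or by quoting known results that every contact $3$-manifold carries forms with arbitrarily large entropy, and then making the form nondegenerate.

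\textbf{Step 2 (stability radius).} Applying \cite{ADMP} with $\epsilon=1$ gives $\delta>0$ such that all $C^3$ forms or metrics that are $\delta$-$C^0$-close to $\alpha_0$ (respectively $g_0$) have entropy $>C$.

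\textbf{Step 3 (non-differentiable perturbation).} We set $\alpha_C=e^{h}\alpha_0$, where $h$ is a continuous, nowhere-differentiable (or just non-differentiable at a point) function with $\|h\|_{C^0}<\delta/2$, for instance a small multiple of a Weierstrass-type function in a chart times a cutoff. Likewise we set $g_C=e^{h}g_0$, rescaled by a constant close to $1$ so that the area is $1$; this keeps the metric within $\delta/2$ of $g_0$. Then any smoothing $(\alpha_j)$ of $\alpha_C$ is eventually $\delta$-close to $\alpha_0$, since $\alpha_j=e^{f_j}\alpha_C=e^{f_j+h}\alpha_0$ with $f_j\to0$ uniformly. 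Hence $\liminf_j h_{\rm top}(\phi_{\alpha_j})\geq C+1-1$, and a strict inequality is obtained by choosing $\epsilon=1/2$. Taking the infimum over smoothings gives $h_{\rm top}(\alpha_C)>C$, and the same argument gives $h_{\rm top}(g_C)>C$.

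\textbf{Main obstacle.} The delicate points are Step 1, namely producing large entropy on every surface with area fixed to $1$ and on every contact manifold, and checking that \cite{ADMP}'s hypotheses (nondegeneracy, bumpy, $C^3$ regularity) hold for the model. I would first try to handle Step 1 for contact manifolds by a local surgery in a Darboux chart producing a horseshoe, since entropy is local to an invariant hyperbolic set. Then I would obtain a nondegenerate model via a generic $C^\infty$-small perturbation, which preserves the horseshoe by structural stability.
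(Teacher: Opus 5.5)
Your proposal is correct and is essentially the paper's argument. The paper also takes a bumpy (resp.\ nondegenerate) smooth model with entropy comfortably above $C$; it simply cites \cite{ADMM} for the area-one metric and uses Yomdin--Newhouse continuity to make it bumpy. It then takes the $C^0$-stability radius from \cite{ADMP}, builds a non-differentiable $C^0$ object within half that radius, and notes that every smoothing eventually enters the radius.

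For the contact case your Step 1 is easier than you fear. Since $h_{\rm top}(\phi_{c\alpha})=c^{-1}h_{\rm top}(\phi_{\alpha})$ for constants $c>0$, one positive-entropy (nondegenerate) form on $(M,\xi)$, rescaled by a small constant, suffices. No Darboux-chart surgery is needed.
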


Before proving our results, we remark that there are examples of polytopes that do not satisfy the conclusion of Theorem \ref{thm:polytope-positive-entropy}. The simplest such example is the unit cube $[-1,1]^4\subset\R^4$. 

\begin{lem}\label{prop:cube-zero-entropy}
The boundary of the unit cube $B_\infty:=[-1,1]^4\subset\R^4$ admits  a sequence $(\Sigma_p)_{p\in \mathbbm{N}}$ of smooth convex starshaped smoothings whose Reeb flows have vanishing topological entropy.
\end{lem}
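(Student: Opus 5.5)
The natural smoothings are the $\ell^p$-unit spheres: for even $p\in\N$, set
\[
\Sigma_p:=\{x\in\R^4 : H_p(x)=1\},\qquad H_p(x_1,y_1,x_2,y_2):=x_1^p+y_1^p+x_2^p+y_2^p .
\]
Each $\Sigma_p$ is a smooth, strictly convex hypersurface; indeed it is real analytic away from where the Hessian degenerates, and it is $C^\infty$ because $H_p$ is a polynomial with $dH_p\neq 0$ on $\Sigma_p$. Since $\max_i|x_i|\le H_p^{1/p}\le 4^{1/p}\max_i|x_i|$, the hypersurfaces $\Sigma_p$ converge in the $C^0$ (Hausdorff, radial) sense to $\partial B_\infty$. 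So they form a starshaped smoothing of $\partial B_\infty$.

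The key point is that $H_p$ splits as $F_p(x_1,y_1)+F_p(x_2,y_2)$ with $F_p(x,y)=x^p+y^p$. Hence its Hamiltonian flow with respect to $\omega_0=dx_1\wedge dy_1+dx_2\wedge dy_2$ is a product of two planar Hamiltonian flows, and it preserves both $F_1:=F_p(x_1,y_1)$ and $F_2:=F_p(x_2,y_2)$. The Reeb flow on $\Sigma_p$ is a reparametrization of the Hamiltonian flow of $H_p$ restricted to $\Sigma_p$. For a starshaped hypersurface the Reeb vector field is a positive multiple of $X_{H_p}$. Topological entropy of a flow without fixed points is not invariant under reparametrization, but positivity is. More concretely, one can use the standard fact that for a smooth positive time change the entropies are comparable up to the max/min of the speed factor, so vanishing is preserved. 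It therefore suffices to show that $h_{\rm top}(\phi^t_{H_p}|_{\Sigma_p})=0$.

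On $\Sigma_p$ the map $(F_1,F_2)$ has image the segment $\{F_1+F_2=1,\ F_i\ge 0\}$. Over an interior point, the fiber is a product of two closed level curves of $F_p$ in $\R^2$, i.e. an invariant torus. Over an endpoint, the fiber is a single level curve, i.e. a periodic orbit. On each level curve $\{F_p=c\}$ with $c>0$, the planar flow is periodic with period $T_p(c)$, and by homogeneity $T_p(c)=c^{(2-p)/p}T_p(1)$. So on each torus the flow is a linear flow with frequencies depending continuously on $(F_1,F_2)$.

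We conclude zero entropy by either of two routes. The first is the variational principle: every ergodic invariant measure is supported on a single invariant torus or periodic orbit, where the flow is an isometry in suitable coordinates and hence has zero metric entropy. The second is to bound $h_{\rm top}$ directly by the polynomial growth of $d\phi^t$, since the flow is a shear in action-angle coordinates. The variational principle is cleaner, since it only requires that each fiber carries a translation flow.

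The main technical care lies in two places. The first is that action-angle coordinates degenerate at the endpoint orbits; this does not matter for the measure-theoretic argument. The second is the reparametrization step, since the statement concerns the Reeb flow. Here it is simpler to check directly that the Reeb vector field is $X_{H_p}/\lambda_0(X_{H_p})$ with $\lambda_0(X_{H_p})=\tfrac12 dH_p(x)\cdot x=\tfrac p2 H_p=\tfrac p2$ on $\Sigma_p$, by Euler's identity. So the Reeb flow is just the Hamiltonian flow rescaled by the constant $2/p$, and no time-change argument is needed at all.
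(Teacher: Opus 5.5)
Your proposal is correct and follows the paper's proof. You use the same smoothings $\Sigma_p=H_p^{-1}(1)$ by $\ell^p$-spheres, the same two commuting integrals $x_i^p+y_i^p$, and the same reduction from the Reeb flow to the Hamiltonian flow of $H_p$. Your version is in fact slightly more careful than the paper's in three places: restricting to even $p$ makes $H_p$ genuinely smooth (the paper's $|x_i|^p$ is not $C^\infty$ for odd $p$); Euler's identity shows the Reeb field is exactly $\tfrac{2}{p}X_{H_p}$ rather than merely a reparametrization; and your ergodic-decomposition plus variational-principle argument replaces the paper's appeal to Paternain's criterion for non-degenerate integrable systems.
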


\begin{proof}

We consider $\R^4$ endowed with the standard symplectic form
\[
\omega_0 =  dx_1 \wedge dy_1 + dx_2\wedge dy_2.
\]
and the standard Liouville form
\[
\lambda_0 = \tfrac12 \sum_{i=1}^2 (x_i\,dy_i - y_i\,dx_i).
\]
For $p\in\N$, consider the Hamiltonian
\[
H_p(x) \;=\; |x_1|^p + |y_1|^p + |x_2|^p + |y_2|^p,
\qquad x=(x_1,y_1,x_2,y_2)\in\R^4.
\]
For any $p$, the function $H_p$ is smooth on $\R^4$ and strictly convex in $\R^4 $ . Let
\[
\Sigma_p := H_p^{-1}(1).
\]
Then $\Sigma_p$ is a smooth, closed, strictly convex hypersurface. It is also starshaped with respect to the origin.

Let
\[
B_p := \{ x\in\R^4 \mid H_p(x)\le 1\}.
\]
It is well-known that, as $p\to\infty$, the sets $B_p$ converge in the Hausdorff topology to the unit cube
\[
B_\infty := [-1,1]^4.
\]
In particular, the boundaries satisfy
\[
\Sigma_p = \partial B_p \;\xrightarrow[C^0]{}\; \partial([-1,1]^4).
\]

We consider the induced contact form $\alpha_p := \lambda_0|_{\Sigma_p}$.
The Hamiltonian $H_p$ is completely integrable. For example, it admits the integrals $F^1_p(x_1,y_1,x_2,y_2) =  |x_1|^p + |y_1|^p$ and  $F^2_p(x_1,y_1,x_2,y_2) =  |x_2|^p + |y_2|^p$, which are non-degenerate in the sense of \cite{Paternain}. This implies that the Hamiltonian flow $\phi^t_{H_p}$ of $H_p$ on the energy level $\Sigma_p$ has vanishing topological entropy. The same is true for the Reeb flow $\phi^t_{\alpha_p}$ of $\alpha_p$ since this flow is a time reparametrization of  $\phi^t_{H_p}$.




This shows that the boundary of the unit cube $[-1,1]^4$ admits smooth convex starshaped smoothings whose Reeb flows have vanishing topological entropy.
\end{proof}

\section{Proof of the main results}

We start with the proof of Theorem \ref{thm:polytope-positive-entropy}.

\textit{Proof of Theorem \ref{thm:polytope-positive-entropy}.}

We work in $\R^4$ endowed with the standard symplectic form
\[
\omega_0 = d\lambda_0, \qquad 
\lambda_0 = \tfrac12 \sum_{i=1}^2 (x_i\,dy_i - y_i\,dx_i).
\]

By \cite[Theorem 1]{ADMP} there exists a contact form $\alpha$ on $(S^3,\xi_{\rm tight})$ whose Reeb flow has positive topological entropy and such that this property is $C^0$-robust: more precisely, there exists $\varepsilon>0$ such that any contact form $\alpha'$ which is $\varepsilon$-close to $\alpha$ in the $C^0$-distance also has $h_{\rm top} > c$, where $c>0$ is any positive number smaller then $h_{\rm top}(\phi_{\alpha_0})$. Here, $\xi_{\rm tight}$ is the unique tight contact structure in $S^3$. Indeed, examples of contact forms on $(S^3,\xi_{\rm tight})$ with positive $h_{\rm top}$ are well-known, and by the main result in \cite{ADMP} there exist arbitrarily small perturbation of $\alpha_0$ in the $C^\infty$-topology which have $C^0$-robust $h_{\rm top}$. Moreover, since contact forms on $(S^3,\xi_{\rm tight})$ with positive $h_{\rm top}$ are dense in the $C^\infty$-topology \cite{Hryniewicz_generic}, and so can be taken to correspond to convex hypersurfaces in $\mathbb{R}^4$.

Fix then $\alpha$ and $c>0$ as above.
Because $\alpha$ is a contact form on $(S^3,\xi_{\rm tight})$, we can realize it as a smooth starshaped hypersurface $M$ in $\R^4$ satisfying
\[
\alpha = \lambda_0|_M.
\]
Let $U\subset \R^4$ be a sufficiently small tubular neighborhood of $M$. After possibly shrinking $U$, we may identify it with a symplectization neighborhood
\[
U \cong (-\delta,\delta)\times M
\]
with symplectic form $d(e^s\alpha)$, where $s$ denotes the symplectization coordinate.

Any smooth starshaped hypersurface $S\subset U$ transverse to the Liouville vector field $\partial_s$ can be written as the graph of a continuous function $s=f(x)$ over $M$. The induced contact form on $S$ is given by
\[
\alpha_S = e^{f}\alpha.
\]
If $U$ is chosen sufficiently small, the contact form $\alpha_S$ must be $C^0$--close to $\alpha$. By the main result of \cite{ADMP} it follows that the Reeb flow on $S$ has $h_{\rm top}>c$.

Now choose a starshaped polytope $P\subset \R^4$ such that
\[
\partial P \subset U,
\]
and which can be written as the graph of a continuous function over $M$ in the tubular coordinates considered above in $U$.  Such a polytope exists since $U$ is an open neighborhood of $M$.

Let $\widetilde M$ be any starshaped smoothing of $P$. By construction, $\widetilde M\subset U$, and therefore $\widetilde M$ is a starshaped hypersurface whose induced contact form is $C^0$-close to $\alpha$. As shown above, this implies that the Reeb flow on $\widetilde M$ has $h_{\rm top}>c>0$.
\qed

\begin{rem}
If one is interested only in convex polytopes and convex smoothings, a version of Theorem \ref{thm:polytope-positive-entropy} can be obtained in this setting via more classical dynamical methods. This can be obtained in the following way. Let $M$ be a smooth closed convex hypersurface in $\R^4$ whose associated Reeb flow $\phi_M$ has positive $h_{\rm top}$. Combining the results of Katok \cite{Katok} and Nitecki \cite{nitecki}, it follows that there exists $\epsilon>0$ such that for any convex hypersurface $M'$ in $\R^4$ with $d_{C^1}(M',M)<\epsilon$ the associated Reeb flow on $M'$ has $h_{\rm top}$ larger than $\frac{h_{\rm top}(\phi_M)}{2}$. A direct application of Theorem 25.7 in \cite{Rockafellar} implies that there exists $\delta>0$ such that if $M'$ is a smooth convex hypersurface in $\R^4 $ such that $d_{C^0}(M',M)<\delta$ then $d_{C^1}(M',M)< \epsilon$. Let then $P$ be a convex polytope in $\R^4$ such that $d_{C^0}(P,M) < \frac{\delta}{2}$. It is clear that any convex smoothing sequence of $P$ must eventually be $\frac{\delta}{2}$-close to $P$ in $d_{C^0}$, and therefore  ${\delta}$-close to $M$ in $d_{C^0}$, and thus ${\epsilon}$-close to $M$ in $d_{C^1}$. By the discussion above, the Reeb flows of any convex smoothing of $P$ must have topological entropy larger than $\frac{h_{\rm top}(\phi_M)}{2}$.
\end{rem}

We now prove Theorem \ref{thm:entropy-for-C0contact/metrics}.

\textit{Proof of Theorem \ref{thm:entropy-for-C0contact/metrics}.}

We only prove the first statement of the theorem, as the proof of the second statement is similar and easier.

Fix $C>0$.
It is well-known that there exists a $C^\infty$-smooth Riemannian metric $g$ on $S$ with area $1$ and such that $h_{\rm top}(\phi_g)>3C$; a proof of this result can be found in \cite{ADMM}. Since, by results of Yomdin and Newhouse, $h_{\rm top}$ is continuous in the $C^\infty$-topology, and bumpy metrics are $C^\infty$-dense in the set of smooth Riemannian metrics, we can take $g$ with area $1$ and such that $h_{\rm top}(\phi_g)>3C$ to be a bumpy metric.

Given Riemannian metrics $g_1,g_2$ on $S$, we write $g_1<g_2$ if for all $v \in TS$ we have $g_1(v,v)<g_2(v,v)$.
Theorem 2 in \cite{ADMP} implies that there exists $\delta>0$ such that for any $C^3$-smooth Riemannian metric $g'$ such that 
\begin{equation} \label{eq:comparisonmetrics}
e^{-2\delta}g < g' < e^{2\delta}g,
\end{equation}
the geodesic flow $\phi_{g'}$ satisfies $h_{\rm top}(\phi_{g'})>C$. Moreover, by possibly choosing a smaller $\delta>0$, we can also assume that the area of any $g'$ satisfying \eqref{eq:comparisonmetrics} is between $\frac{3}{4}$ and $\frac{5}{4}$.

Since \eqref{eq:comparisonmetrics} is an open condition, it is easy to construct a non-differentiable $C^0$-Riemannian metric $g_C$ such that 
\begin{equation} 
e^{-\delta}g < g_C < e^{\delta}g,
\end{equation}
and $g_C$ has area $1$. Since nowhere differentiable functions are $C^0$-dense in the space of continuous functions, we can take $g_C$ to be nowhere differentiable, by ensuring that the functions defining $g_C$ in coordinate charts are nowhere differentiable. 
By the above paragraph, any smoothing sequence of $g_C$ will have $h_{\rm top}$ larger than $C$, since sufficiently large elements of the smoothing sequence will satisfy \eqref{eq:comparisonmetrics}. This shows that $h_{\rm top}(g_C)>C$, as desired.
\qed

\section{Questions} \label{sec:questions}

In this section, we list some questions that we consider interesting for future investigation.

In his Master’s thesis \cite{Tsodikovich}, Tsodikovich studies the topological entropy of the Reeb dynamics on the standard simplex in $\mathbb{R}^4$ from a different perspective. Rather than performing smoothings, he analyzes the Reeb dynamics directly on the polytope by restricting it to a subset on which the dynamics is continuous. Using this approach, he shows that the resulting notion of topological entropy vanishes. A natural and intriguing question is whether the notion of topological entropy considered in \cite{Tsodikovich} coincides with that of Definition~\ref{def:entropy-polytopes}. More generally, we propose the following question:
\begin{question}
Can we recover the topological entropy of a starshaped polytope $P \subset \R^4$ in the sense of Definition \ref{def:entropy-polytopes} from the combinatorial data defining $P$?
Which values in $[0,+\infty]$ can be the topological entropy of polytopes? Is it true that ``generic'' polytopes in $\R^4$ have positive $h_{\rm top}$?
\end{question}

\begin{question}
Which values in $[0,+\infty]$ can be realized as the topological entropy of non-differentiable $C^0$-contact forms on a closed  contact $3$-manifolds?
Which values in $[0,+\infty]$ can be realized as the topological entropy of non-differentiable $C^0$-Riemannian metrics on a closed orientable surface?
\end{question}

\begin{question}
Can the results in \cite{Alves-Meiwes24} be used to extend the notion of topological entropy to the $d_{\rm Hofer}$-completion of the group of Hamiltonian diffeomorphisms of a closed surface? How does this connect to the recent results in $C^0$-symplectic topology in the sense of \cite{BHS,HLS,Opshtein}?
\end{question}

\subsection{Acknowledgments}
We thank Viktor Ginzburg, Yaron Ostrover, and Felix Schlenk for their interest in this work and for their comments and suggestions. We thank our collaborators Lucas Dahinden and Abror Pirnapasov: this note builds on our joint work. We thank Alberto Abbondandolo and Jungsoo Kang for helpful discussions on convex energy levels. 

\bibliographystyle{plain}
\bibliography{ReferencesS5}

\end{document}